%
\documentclass[runningheads]{llncs}
\usepackage[utf8]{inputenc}
\usepackage[T1]{fontenc}
\usepackage{amsmath}
\usepackage{amsfonts}
\usepackage{amssymb}
\usepackage{makeidx}
\usepackage{graphicx}
\usepackage[T1]{fontenc}
\usepackage{csquotes}
\usepackage{hyperref}
\usepackage[english]{babel}
\usepackage{bbold}
\usepackage{cases}
\usepackage{enumitem}
\usepackage{multicol}

\usepackage{graphicx}

\usepackage{amsmath,amscd,amsxtra,latexsym,mathrsfs}
\usepackage{dsfont}
\usepackage{mathabx}
\usepackage{stmaryrd}

\newcommand{\esp}{\mathbb{E}}

\setlength{\parskip}{1ex plus 0.5ex minus 0.2ex}

\newcommand{\ad}{\mbox{ad}}

\newcommand{\Xt}{X^{\tau}}
\newcommand{\nut}{\nu^{\tau}}
\newcommand{\sigt}{\sigma^{\tau}}

\newcommand{\Wt}{W^{\tau}}

\newcommand{\EE}{{\mathbb E}}
\newcommand{\E}{{\mathbb E}}

\newcommand{\PP}{{\mathbb P}}

\newcommand{\SE}{{\mathscr E}}
\newcommand{\SG}{{\mathscr G}}

\newcommand{\SH}{{\mathscr H}}

\def\di{\displaystyle}
\def\f{\frac}

\def\n{\nabla }

\def\s{\sigma }

\def\mathpal#1{\mathop{\mathchoice{\text{\rm #1}}%
   {\text{\rm #1}}{\text{\rm #1}}%
   {\text{\rm #1}}}\nolimits}

\def\Id{\mathpal{Id}}



\usepackage{xcolor}

\DeclareMathOperator{\Hess}{Hess}
\usepackage{graphicx}
%

\begin{document}
\title{Continuous-time filtering in Lie groups : estimation via the Fréchet mean of solutions to stochastic differential equations}
\titlerunning{Continuous time filtering in Lie groups : estimation via Fréchet mean}
%
\author{Marc Arnaudon\inst{1} \and Magalie Bénéfice\inst{2} \and
Audrey Giremus\inst{3}}
\authorrunning{M. Arnaudon, M. Bénéfice and A. Giremus}
%
\institute{Univ. Bordeaux, CNRS, Bordeaux INP, IMB, UMR 5251, F-33400 Talence, France \and Université de Lorraine, CNRS, IECL, F-54000 Nancy, France \and Univ. Bordeaux, CNRS, Bordeaux INP, IMS, UMR 5218, F-33400 Talence, France}
\maketitle              
\begin{abstract}
We compute the Fréchet mean $\SE_t$ of the solution $X_t$ to a continuous-time stochastic differential equation in a Lie group. It provides an estimator with minimal variance of $X_t$. We use it in the context of Kalman filtering and more precisely to infer rotation matrices. In this paper, we focus on the prediction step between two consecutive observations. Compared to state-of-the-art approaches, our assumptions on the model are minimal.
\keywords{ Filtering \and Lie groups  \and rotation matrices \and Fr\'echet mean \and Prediction }
\end{abstract}

\section{Introduction}
\label{S0}
Optimal filtering has been an active area of research since the introduction of the Kalman filter in the 1960s \cite{K:60}, with applications spanning from navigation and target tracking to industrial control. Over the past decades, different variants have been proposed to overcome the limitations of the seminal algorithm. They make it possible to deal with non-Gaussian uncertainties or non-linearities in the observation or dynamic models, which jointly define the state space representation. More recently, the topic of inferring parameters that do not lie on Euclidean spaces has emerged, with a focus on Lie groups. This issue occurs for instance when estimating rotation matrices, as required in inertial navigation, or, more generally, rigid body transformations such as the pose of a camera in computer vision. Filters designed for that purpose either provide analytical solutions at the cost of restrictive assumptions on the state space representation or leverage the Bayesian formalism to calculate approximate estimates and confidence intervals. In the first case, either invariant or equivariant properties are usually required \cite{Ba:17,VG:23}, whereas in the latter case, the posterior distribution of the parameters to be inferred, conditionally upon the measurements, is enforced to be a concentrated Gaussian distribution in several works \cite{Bo:15}. Conversely, this paper sets the basis for an alternative continuous-time filter on Lie groups that relaxes the assumptions on the state space representation while yielding estimates accurate up to the second order. We focus on the calculation of the prediction of the parameters of interest in the time interval between two consecutive observations. More precisely, we establish a system of differential equations that describes the joint evolution of their Fr\'echet mean, taken as their estimate, and of the covariance matrix of the estimation error in the Lie algebra.

The outline of the paper is the following: Section \ref{S1} is dedicated to prerequisites on the Fr\'echet mean on Lie groups; Section \ref{S2} addresses its calculation in the case of a generic stochastic differential equation on Lie groups; proof is provided in Section \ref{S3} and Section \ref{Ssim} presents simulation results while conclusions and perspectives are proposed in Section \ref{Sfinal}.

\section{Prerequisites:  Fr\'echet means on Lie groups}\label{S1}

Let $G$ be a Lie group with Lie algebra $\SG$ and neutral element~$e$.
\begin{definition}
\label{D1}
For a random variable $X$ on $G$ with support ${\rm supp }(X)$, we say that $\SE(X)\in G$ is the exponential barycenter of $X$ if 
there exists a  $\SG$-valued integrable and centered random variable $\nu(X)$  such that 
\begin{equation}
    \label{E1}
    X=\SE(X)\exp(\nu(X)),
\end{equation}
the segment $\{\SE(X)\exp(t\nu(X)),\  t\in [0,1]\} $ is included in the convex hull of ${\rm supp }(X)$
and the couple $(\SE(X),\nu(X))$ is unique.
\end{definition}
It is well known (\cite{EM:91} Propositions  4 and 5) that if the support of $X$ is sufficiently small, then the exponential barycenter of $X$ exists (and is unique by definition).

In the sequel we will assume that $G$ has a left invariant metric, and we will denote by $\langle\cdot,\cdot\rangle_g$ the scalar product at point $g\in G$. 
\begin{definition}
\label{D2}
An open ball $B(g,R)$ of $G$ centered at $g$ with radius $R>0$ is said to be a regular geodesic ball if $\di R<\f{\pi}{2\sqrt{K}}$ where $K>0$ is an upper bound of the sectional curvatures in $G$, and $\forall x\in B(g,R)$, the cutlocus of $x$ does not meet $B(g,R)$.  
\end{definition}

In this situation, we have the following result (\cite{K:90} Theorem 4.2 and \cite{P:94} Proposition 3.6)
\begin{proposition}
\label{P1}
If the support of the random variable $X$ is included in a regular geodesic ball of $G$, then $\SE(X)$ is the unique minimizer in $\SH(X)$ of $\di g\mapsto \EE\left[ \rho^2(g,X)\right]$, where $\rho$ is the Riemannian distance in $G$. Moreover, we have 
\begin{equation}
    \label{E2}
  \EE\left[ \rho^2(\SE(X),X)\right]=\EE[\|\nu\|^2].
\end{equation}
In other words, $\SE(X)$ is the Fréchet mean of $X$.
Consequently, $\SE(X)$ is the estimator of $X$ minimizing the variance of the error.
\end{proposition}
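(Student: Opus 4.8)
The plan is to use the classical variational characterization of the Fréchet mean and then match it, via the first-order optimality condition, to the exponential barycenter of Definition \ref{D1}. First I would fix the variance functional $F(g)=\EE\left[\rho^2(g,X)\right]$ and show it has a unique minimizer on $\SH(X)$ (the closed convex hull of ${\rm supp}(X)$, which lies in the regular geodesic ball). The key geometric input is that on a regular geodesic ball the squared-distance function $g\mapsto \rho^2(g,x)$ is strictly geodesically convex for each fixed $x$ in the ball; this is exactly where the bound $\di R<\f{\pi}{2\sqrt K}$ and the absence of cut points enter, through a Hessian/Jacobi-field comparison making $\Hess\,\rho^2(\cdot,x)$ uniformly positive on the ball (this is the content of the quoted results of \cite{K:90} and \cite{P:94}). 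Since strict convexity is preserved by the expectation over $X$, the functional $F$ is itself strictly geodesically convex on the geodesically convex set $\SH(X)$; continuity of $F$ and compactness of $\SH(X)$ give existence, and strict convexity gives uniqueness, of a minimizer $m\in\SH(X)$.

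Next I would write the first-order condition. Away from the cut locus the gradient of the squared distance is $\grad_g\,\rho^2(g,x)=-2\exp_g^{-1}(x)$, the Riemannian logarithm pointing from $g$ toward $x$. Differentiating under the expectation — legitimate because all quantities are bounded on the compact ball — the minimizer $m$ is characterized by
\[
  \EE\!\left[\exp_m^{-1}(X)\right]=0\in T_mG,
\]
and by strict convexity this equation has $m$ as its only solution in $\SH(X)$.

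It then remains to identify $m$ with $\SE(X)$ and to compute the value. Writing $X=\SE(X)\exp(\nu(X))$ and using left-invariance of the metric, the geodesic realizing $\rho(\SE(X),X)$ is the left translate by $\SE(X)$ of the curve $t\mapsto\exp(t\nu(X))$ (which lies in $\SH(X)$ by the segment condition of Definition \ref{D1}); hence $\exp_{\SE(X)}^{-1}(X)$ is the left translate of $\nu(X)$, and $\EE[\exp_{\SE(X)}^{-1}(X)]=0$ is equivalent to the centering $\EE[\nu(X)]=0$ imposed in Definition \ref{D1}. Thus $\SE(X)$ solves the first-order equation, and uniqueness forces $m=\SE(X)$: the exponential barycenter is the Fréchet mean. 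For \eqref{E2}, left-invariance gives $\rho(\SE(X),X)=\rho(e,\exp(\nu(X)))$; since $\exp(\nu(X))$ lies within the injectivity radius, $t\mapsto\exp(t\nu(X))$ is the minimizing geodesic from $e$, of length $\|\nu(X)\|$, and squaring and taking expectations yields $\EE[\rho^2(\SE(X),X)]=\EE[\|\nu\|^2]$.

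The step I expect to be most delicate is this last identification, since it tacitly uses that the group exponential $\exp$ coincides with the Riemannian exponential along the translated one-parameter subgroups $t\mapsto\SE(X)\exp(t\nu)$, i.e. that these curves are genuine geodesics. This is automatic for a bi-invariant metric — the natural choice for $SO(n)$ and the rotation-matrix application — but for a merely left-invariant metric it must be justified, and it is precisely what reconciles the algebraic centering condition of Definition \ref{D1} with the Riemannian first-order condition. The convexity estimate of the first paragraph is technically the heaviest ingredient, but it can be quoted directly; the conceptual crux is the $\exp$/$\log$ bookkeeping making $\nu(X)$ the Riemannian logarithm of $X$ at its mean.
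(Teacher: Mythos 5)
The paper itself gives no proof of Proposition~\ref{P1}: it is quoted as a known result from \cite{K:90} (Theorem 4.2) and \cite{P:94} (Proposition 3.6). So what you are really doing is reconstructing the proof of the cited theorems, and your architecture (variance functional, first-order condition, identification with the exponential barycenter) is the standard one. However, your key mechanism fails at the level of generality of Definition~\ref{D2}. You claim that $g\mapsto\rho^2(g,x)$ is strictly geodesically convex on a regular geodesic ball of radius $R<\f{\pi}{2\sqrt{K}}$, and you attribute this Hessian positivity to the cited references. It is false: two points of the ball can be at distance $r$ close to $2R$, hence close to $\f{\pi}{\sqrt{K}}$, and the comparison theorem only gives, in directions orthogonal to the connecting geodesic, the Hessian lower bound $2\sqrt{K}\,r\cot\left(\sqrt{K}\,r\right)$, which is negative as soon as $r>\f{\pi}{2\sqrt{K}}$. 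Convexity of the squared distance (Karcher's argument) therefore requires $R<\f{\pi}{4\sqrt{K}}$, so that all pairwise distances stay below $\f{\pi}{2\sqrt{K}}$. Kendall's Theorem 4.2 --- the result actually cited --- exists precisely to remove this factor of $2$: it proves existence, uniqueness and the stationarity characterization $\EE\left[\exp_m^{-1}(X)\right]=0$ of the minimizer in the full range $R<\f{\pi}{2\sqrt{K}}$ by a different device (a convex function on $B\times B$ vanishing exactly on the diagonal), not by convexity of $\rho^2(\cdot,x)$. As written, your step ``strict convexity is preserved by the expectation \ldots gives uniqueness'' is the one that breaks; you must either shrink the radius to $\f{\pi}{4\sqrt{K}}$ or quote Kendall's theorem as a black box for existence, uniqueness and the first-order equation.

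The second difficulty, which you flagged but left unresolved, is also genuine: with a merely left-invariant metric, the curves $t\mapsto\SE(X)\exp(t\nu)$ built from the group exponential are not geodesics, and under that reading of Definition~\ref{D1} the proposition would actually be false --- one would only get $\rho(\SE(X),X)\le\|\nu(X)\|$ in \eqref{E2}, since the one-parameter subgroup has length $\|\nu(X)\|$ but need not be minimizing. The reconciliation intended by the paper is visible in Equation~\eqref{E4}: $\nu$ is defined as $\log(\SE_t,X_t^\tau)$ with $\log(y,x)=y^{-1}\log_y(x)$, i.e.\ as the left-translate to $\SG$ of the \emph{Riemannian} logarithm. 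With that reading, your identification is definitional: $\EE[\nu]=0$ is exactly $\EE\left[\exp_{\SE(X)}^{-1}(X)\right]=0$ transported by the linear isometry of left translation, and \eqref{E2} is immediate from $\rho(\SE(X),X)=\|\log_{\SE(X)}(X)\|_{\SE(X)}=\|\nu\|_e$. (In the application of Theorem~\ref{T3}, $G=SO(3)$ carries a bi-invariant metric, so the two exponentials coincide and the distinction disappears, as you note.) Finally, for the conclusion $m=\SE(X)$ you do not need convexity at all: once the minimizer $m$ satisfies the first-order equation, the pair $\left(m,\,m^{-1}\exp_m^{-1}(X)\right)$ fulfils every requirement of Definition~\ref{D1} (the geodesic segments lie in the convex hull, which is convex in a regular ball), and the uniqueness clause of Definition~\ref{D1}, guaranteed by \cite{EM:91}, forces $m=\SE(X)$.
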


\section{Computing the Fréchet mean of a stochastic differential equation}\label{S2}

Consider the Stratonovich stochastic differential equation (SDE) in~$G$
\begin{equation}
    \label{E3}
    \circ dX_t= X_t\left(b(X_t)\, dt +\circ dW_t\right), \qquad X_0=e
\end{equation}
with $b\in C^\infty(G,\SG)$, $\di W_t=\sum_{i=1}^m \s_i W_t^i$ where for all $i$, $\s_i\in \SG$, and the $(W_t^i)_{1\le i\le m}$ are independent real-valued Brownian motions. Denote by $\tau$ the exit time of a fixed regular geodesic ball $B(e,R)$.   Denote by $\SE_t=\SE(X_t^\tau)$ the exponential barycenter of the stopped process $X_t^\tau=X_{t\wedge\tau}$ and $\nu_t^\tau=\nu(X_t^\tau)$ the error of the estimator. 
For $y,x\in B(e,R)$, denote $\log(y,x)=y^{-1}\log_y(x)\in \SG$. Notice that 
\begin{equation}
    \label{E4}
    \nu_t^\tau=\log(\SE_t,X_t^\tau).
\end{equation}
We will also denote , $b^\tau(X_t)=b(X_t)1_{\{t<\tau\}}$ and all  $\s_i^\tau=\s_i1_{\{t<\tau\}}$, $i=1,\ldots, m$.
The following theorem yields an ordinary differential equation for the Fréchet mean, which is the basis for computations.
\begin{theorem}
\label{T1}
The Fréchet mean $\SE_t$ of the solution to Equation~\eqref{E3} is solution to the McKean-Vlasov equation $\di d\SE_t=\SE_t h_t\, dt$ with 
 \begin{equation}\label{E5}
        \begin{split}
    &h_t=-\EE\left[T_1\log(\SE_t\cdot, 0_{X_t^\tau})\right]^{-1}\\
    &\cdot \EE\left[T_2\log(0_{\SE_t},\cdot_{X_t^\tau})(X_t^\tau b^\tau (X_t^\tau))+\f12\sum_{i=1}^m\Hess_2\log(0_{\SE_t},\cdot_{X_t^\tau})(X_t^\tau\s_i^\tau\otimes X_t^\tau\s_i^\tau) \right].
\end{split}
    \end{equation}
    Here $T_1$, $T_2$ and $\Hess_2$ denote respectively derivative with respect to first and second variable, and Hessian with respect to the second variable.
\end{theorem}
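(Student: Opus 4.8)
The plan is to start from the defining property of the exponential barycenter rather than from the variance‑minimisation characterisation. By Definition~\ref{D1} the error $\nu_t^\tau=\log(\SE_t,X_t^\tau)$ is centred for every $t$, i.e.
\begin{equation*}
\EE\left[\log(\SE_t,X_t^\tau)\right]=0\qquad\text{for all }t\ge 0 .
\end{equation*}
Since the left-hand side is identically zero, so is its time derivative. Positing the (as yet unknown) evolution of the mean in the form $d\SE_t=\SE_t h_t\,dt$ with $h_t\in\SG$ deterministic, the whole problem reduces to computing the drift of the $\SG$-valued process $t\mapsto\log(\SE_t,X_t^\tau)$ and solving the resulting scalarised identity for $h_t$.

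First I would apply the Stratonovich chain rule to $\log$, viewed as a smooth map on $B(e,R)\times B(e,R)$ — smooth precisely because the ball is a regular geodesic ball, so that $\log_y$ is a diffeomorphism there with cut loci avoided. This splits the differential into a first-variable contribution, coming from the deterministic curve $\SE_t$ with velocity $\SE_t h_t$, and a second-variable contribution, coming from the stopped equation~\eqref{E3}, whose Stratonovich differential is $\circ dX_t^\tau=X_t^\tau b^\tau(X_t^\tau)\,dt+\sum_{i=1}^m X_t^\tau\sigma_i^\tau\circ dW_t^i$:
\begin{equation*}
\circ d\log(\SE_t,X_t^\tau)=T_1\log(\SE_t\cdot,0_{X_t^\tau})(h_t)\,dt+T_2\log(0_{\SE_t},\cdot_{X_t^\tau})(\circ dX_t^\tau).
\end{equation*}
The second term must then be converted to Itô form, since only Itô stochastic integrals have vanishing expectation. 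The Stratonovich correction of $T_2\log(0_{\SE_t},\cdot_{X_t^\tau})\bigl(\sum_i X_t^\tau\sigma_i^\tau\circ dW_t^i\bigr)$ produces exactly the term $\tfrac12\sum_i\Hess_2\log(0_{\SE_t},\cdot_{X_t^\tau})(X_t^\tau\sigma_i^\tau\otimes X_t^\tau\sigma_i^\tau)\,dt$, the covariant Hessian being what absorbs the connection terms arising from differentiating the left-invariant fields $X\mapsto X\sigma_i$ along the flow.

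Taking expectations, every Itô martingale part vanishes, and since $\log(\SE_t,X_t^\tau)$ has identically zero expectation its total drift must vanish:
\begin{equation*}
\EE\left[T_1\log(\SE_t\cdot,0_{X_t^\tau})\right]h_t+\EE\left[T_2\log(0_{\SE_t},\cdot_{X_t^\tau})(X_t^\tau b^\tau)+\tfrac12\sum_{i=1}^m\Hess_2\log(0_{\SE_t},\cdot_{X_t^\tau})(X_t^\tau\sigma_i^\tau\otimes X_t^\tau\sigma_i^\tau)\right]=0,
\end{equation*}
where $h_t$, being deterministic, has been pulled out of the first expectation. This is a linear equation for $h_t$; solving it (inverting the first expectation) gives formula~\eqref{E5}, and the dependence of the coefficients on the law of $X_t^\tau$ is what makes the equation of McKean–Vlasov type.

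I expect two kinds of obstacle. The first is rigour of the formal manipulations: that $t\mapsto\SE_t$ is differentiable, so that the ansatz $d\SE_t=\SE_t h_t\,dt$ is legitimate, which I would obtain from the implicit function theorem applied to $(g,t)\mapsto\EE[\log(g,X_t^\tau)]$ together with the invertibility discussed below; and that $\frac{d}{dt}$ and $\EE$ may be interchanged while the Itô integrals are genuine martingales, both guaranteed by stopping at $\tau$ (which confines $X_t^\tau$ to the compact closure of the ball) and by smoothness of $b$ and $\log$ there. The second, and more delicate, is the Stratonovich-to-Itô bookkeeping in the second variable: one must check that the quadratic-covariation correction reassembles \emph{exactly} into the covariant Hessian $\Hess_2\log$, and not a coordinate second derivative, which is where the Levi-Civita connection of the left-invariant metric enters. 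Finally, solving for $h_t$ needs $\EE[T_1\log(\SE_t\cdot,0_{X_t^\tau})]$ to be invertible; this holds because $T_1\log(\cdot,x)$ is (up to left translation) minus the Hessian of the strictly convex function $\tfrac12\rho^2(\cdot,x)$ on a regular geodesic ball, hence uniformly nondegenerate, a property that passes to the expectation.
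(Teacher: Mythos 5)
Your proposal follows essentially the same route as the paper's own proof: write the Stratonovich chain rule for $\nu_t^\tau=\log(\SE_t,X_t^\tau)$, pass to It\^o form so the quadratic correction yields the $\Hess_2\log$ term, differentiate the identity $\EE[\nu_t^\tau]\equiv 0$ to kill the martingale part, and solve the resulting linear equation for $h_t$ by inverting $\EE\left[T_1\log(\SE_t\cdot,0_{X_t^\tau})\right]$. The only differences are in the justification of the auxiliary facts — the paper cites \cite{K:90}, \cite{EM:91} for existence/smoothness of $\SE_t$ and \cite{AL:05} Lemma 2.6 for invertibility, where you sketch an implicit-function-theorem and a convexity-of-$\rho^2$ argument — which are precisely the arguments underlying those references.
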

\begin{proof}
The existence, uniqueness and smoothness of $\SE_t$ with respect to the law of $X_t^\tau$ is garanteed by the fact that ${\rm supp}(X_t^\tau)$ is included in a regular geodesic ball (\cite{K:90} Theorem 4.2, \cite{EM:91} Propositions 4 and 5). On the other hand we have 
\begin{equation}
    \label{E10}
    \circ d\nu_t^\tau=T\log(d\SE_t,\circ dX_t^\tau)
\end{equation}
yielding in It\^o form
\begin{equation}
    \label{E11}
    \begin{split}
     d\nu_t^\tau=&T_1\log(\cdot,0_{X_t^\tau})(d\SE_t)+T_2\log (0_{\SE_t},\cdot)(X_tb_t^\tau(X_t))dt\\&+\f12\sum_{i=1}^m\Hess_2\log(0_{\SE_t},\cdot_{X_t^\tau})(X_t^\tau\s_i^\tau\otimes X_t^\tau\s_i^\tau)dt+dM_t
     \end{split}
\end{equation}
with $M_t$ a martingale. Since $\di \EE[\nu_t^\tau]\equiv 0$ we get from~\eqref{E11}
\begin{equation}
    \label{E12}
    \begin{split}
     0=&\EE\left[T_1\log(\cdot,0_{X_t^\tau})\right](\SE_t h_t)+\EE\left[T_2\log (0_{\SE_t},\cdot)(X_t^\tau b_t^\tau(X_t))\right]\\&+\f12\EE\left[\sum_{i=1}^m\Hess_2\log(0_{\SE_t},\cdot_{X_t^\tau})(X_t^\tau\s_i^\tau\otimes X_t^\tau\s_i^\tau)\right].
     \end{split}
\end{equation}
By \cite{AL:05} Lemma 2.6, the endomorphism of $\SG$:   $\EE\left[T_1\log(\SE_t\cdot,0_{X_t^\tau})\right]$ is invertible. From this last point we get~\eqref{E5}, and this achieves the proof of Theorem~\ref{T1}.
\qed
\end{proof}
With Theorem~\ref{T1} at hand, we propose to look for computable approximations of the Fréchet means $\SE_t$ of the solution~$X_t$ to the stochastic differential equation~\eqref{E3}, together with the error term~$\nu_t$.

\begin{theorem}
\label{T2}
We have the expansions $d\SE_t=\SE_th_t\, dt$ with
    \begin{equation}\label{E6}
        \begin{split}
          &h_t=  b(\SE_t)\\&+\f12\left(\Hess_{\SE_t}b\left(\SE_t(\cdot),\SE_t(\cdot)\right)-\left[ T_{\SE_t}b(\SE_t(\cdot),\cdot\right]+\f1{8}\sum_{i=1}^m[[[\s_i,\cdot],\cdot ],\s_i]
         \right)\cdot\EE[\nu_t^\tau\otimes \nu_t^\tau]\\&+O(t^2).
        \end{split}
    \end{equation}
    For the error term we have
    \begin{equation}\label{E7}
        \begin{split}
       & \f{d}{dt} \EE[\nu_t^\tau\otimes \nu_t^\tau]\\
       &=
       \left\{
       \left(2T_{\SE_t}b(\SE_t (\cdot))-2[b(\SE_t),\cdot]-\f13\sum_{i=1}^m[[\s_i,\cdot],\s_i]\right)\odot \Id
       \right\}
       \EE[\nu_t^\tau\otimes \nu_t^\tau]\\
       &+\f14\left(\sum_{i=1}^m[\s_i,\cdot]\otimes [\s_i,\cdot]\right)\EE[\nu_t^\tau\otimes \nu_t^\tau]+\sum_{i=1}^m\s_i\otimes \s_i+ O(t^2)
        \end{split}
    \end{equation}
    where $A\odot B=\frac12(A\otimes B+B\otimes A)$ is the symmetric tensor product.
\end{theorem}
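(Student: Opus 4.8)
The plan is to derive both expansions directly from the exact identities already in hand—the McKean--Vlasov formula~\eqref{E5} and the Itô equation~\eqref{E11} for $\nu_t^\tau$—by Taylor-expanding every geometric coefficient in the small parameter that measures the separation of $X_t^\tau$ from its Fréchet mean. Because $X_0=e$ and the driving noise has quadratic variation of order $t$, standard moment estimates for~\eqref{E3} give $\nu_t^\tau=O(\sqrt t)$ in $L^2$, hence $\EE[\nu_t^\tau\otimes\nu_t^\tau]=O(t)$; moreover $\nu_t^\tau$ is exactly centered by Definition~\ref{D1}, so its odd moments contribute only at order $t^2$. I would therefore organize the entire computation as an expansion in powers of $\nu_t^\tau$, keeping terms through the second moment and discarding everything that is $O(t^2)$.

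The heart of the argument is the second-order local expansion, about the diagonal, of the three differential operators appearing in~\eqref{E5} and~\eqref{E11}: $T_1\log(\SE_t\cdot,0_{X_t^\tau})$, $T_2\log(0_{\SE_t},\cdot_{X_t^\tau})$ and $\Hess_2\log(0_{\SE_t},\cdot_{X_t^\tau})$. Writing $\log(y,x)=y^{-1}\log_y(x)$ and comparing the Riemannian logarithm with the Lie-group logarithm, I would expand each operator in $\nu_t^\tau$. At leading order $T_2\log=\Id$ and $T_1\log=-\Id$, while $\Hess_2\log$ vanishes on the diagonal since $\log_y$ is a normal chart at $y$. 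The first correction to $T_2\log$ is the half-bracket $\tfrac12[\nu,\cdot]$ coming from the differential of the group exponential, and its second correction, like the first correction to $\Hess_2\log$, is a curvature term; on the relevant left-invariant metric these reduce to double brackets in the $\sigma_i$ and $\nu$. This is precisely where the numerical constants originate: the curvature coefficient combined with the $\tfrac12$ of the Itô--Stratonovich correction produces the $\tfrac18\sum_i[[[\sigma_i,\cdot],\cdot],\sigma_i]$ of~\eqref{E6}, the square of the half-bracket produces the $\tfrac14\sum_i[\sigma_i,\cdot]\otimes[\sigma_i,\cdot]$ of~\eqref{E7}, and the curvature correction to $\Hess_2\log$ produces the $\tfrac13\sum_i[[\sigma_i,\cdot],\sigma_i]$.

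To assemble~\eqref{E6}, I would substitute these expansions into~\eqref{E5}, together with the covariant Taylor expansion $b(X_t^\tau)=b(\SE_t)+T_{\SE_t}b(\SE_t\nu)+\tfrac12\Hess_{\SE_t}b(\SE_t\nu,\SE_t\nu)+\cdots$ and the inverse $\EE[T_1\log]^{-1}=-\Id+O(t)$. After taking expectations the linear-in-$\nu$ terms drop, $b(\SE_t)$ survives at order $1$, and at order $t$ one collects $\tfrac12\Hess_{\SE_t}b$, the half-bracket cross term $-\tfrac12[T_{\SE_t}b(\cdot),\cdot]$, and the noise-curvature term, all contracted with $\EE[\nu_t^\tau\otimes\nu_t^\tau]$; the $b$-dependent curvature contributions coming from $T_2\log$ and from the $O(t)$ part of $\EE[T_1\log]^{-1}$ must cancel so that only the pure-$\sigma_i$ bracket remains. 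For~\eqref{E7} I would apply the Itô product rule to $\nu_t^\tau\otimes\nu_t^\tau$ using~\eqref{E11} and take expectations, so that the martingale part and the linear-in-$\nu$ terms vanish. The quadratic variation of the martingale part—at leading order $\sum_i\sigma_i\otimes\sigma_i$—supplies the constant source, its half-bracket correction supplies $\tfrac14\sum_i[\sigma_i,\cdot]\otimes[\sigma_i,\cdot]$, and the drift of $d\nu_t^\tau$, expanded to first order in $\nu$ and tensored with $\nu$ in both orders, supplies the symmetrized operator $\big(2T_{\SE_t}b-2[b(\SE_t),\cdot]-\tfrac13\sum_i[[\sigma_i,\cdot],\sigma_i]\big)\odot\Id$; the factor $2$ and the $\odot\Id$ structure are exactly the two orderings $d\nu_t^\tau\otimes\nu_t^\tau$ and $\nu_t^\tau\otimes d\nu_t^\tau$.

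The main obstacle is the consistent second-order expansion of the Riemannian log map, its differential and its Hessian on a left-invariant metric, carried far enough to fix the coefficients $\tfrac18$, $\tfrac14$, $\tfrac13$. One must separate genuine curvature contributions from Baker--Campbell--Hausdorff bracket contributions, pin down sign and normalization conventions, verify the cancellation of the $b$-dependent curvature terms in~\eqref{E6}, and check that the symmetrizations reproduce exactly the $\odot\Id$ and $\otimes$ patterns of~\eqref{E7}. Finally one must confirm that all omitted terms—fourth-order moments and the centered odd moments of $\nu_t^\tau$—are genuinely $O(t^2)$, which closes the control of the remainder.
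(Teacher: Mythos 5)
Your overall skeleton---expand to second order in $\nu_t^\tau$, use the exact centering $\EE[\nu_t^\tau]=0$ to kill linear terms, apply the It\^o product rule to $\nu_t^\tau\otimes\nu_t^\tau$, and control remainders via $\|\nu_t^\tau\|_{L^k}=O(\sqrt t)$---matches the paper's. But your proposal is missing precisely the step that constitutes the content of the theorem: the second-order expansions of $T_1\log$, $T_2\log$ and $\Hess_2\log$ with their exact coefficients. You never derive them; you reverse-engineer the constants from the statement (``this is precisely where the numerical constants originate'') and then defer their computation as ``the main obstacle''. The paper, by contrast, never expands these operators at all. It differentiates the defining identity $X_t^\tau=\SE_t\exp(\nu_t^\tau)$ using the exact formula $d_u\exp=\exp(u)\sum_{k\ge0}\frac{\ad(-u)^{(k)}}{(k+1)!}$, converts Stratonovich to It\^o, splits $\nu_t^\tau=M_t+D_t$ into martingale and drift parts, solves for $dM_t$ and $dD_t$ by iterated application of $\ad(-\nu_t^\tau)$, and then determines $h_t$ from the exact constraint $\EE[D_t]\equiv 0$ and \eqref{E7} from $d\EE[\nu_t^\tau\otimes\nu_t^\tau]=\EE[2\,dD_t\odot\nu_t^\tau+d\nu_t^\tau\otimes d\nu_t^\tau]$. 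All coefficients ($\tfrac18$, $\tfrac14$, $-\tfrac13$, the $\odot\Id$ structure) drop out of this purely Lie-algebraic calculus.

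Beyond incompleteness, the mechanism you propose for producing the constants is pointed in a questionable direction. You attribute the corrections to $T_2\log$ and $\Hess_2\log$ to ``genuine curvature contributions'' which ``on the relevant left-invariant metric\ldots reduce to double brackets''. That reduction is valid for bi-invariant metrics, where $\n_XY=\tfrac12[X,Y]$ and $R(X,Y)Z=-\tfrac14[[X,Y],Z]$, but for a general left-invariant metric the curvature involves the metric adjoint of $\ad$ (Milnor's formulas) and is \emph{not} a pure bracket expression; moreover Riemannian normal coordinates at $\SE_t$ then differ at second order from the group chart $\nu\mapsto\SE_t\exp(\nu)$ in which $\nu_t^\tau$ and the theorem's formulas live. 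Since Theorem~\ref{T2} is stated for a general left-invariant metric and its right-hand sides contain only brackets, your route would either produce extra non-bracket terms or require a cancellation argument you do not supply; the paper's group-exponential/BCH derivation avoids this issue entirely. Finally, your remainder control is too quick: ``odd moments contribute only at order $t^2$'' is not automatic, since third moments are a priori only $O(t^{3/2})$; the paper upgrades $O(t^{3/2})$ to $O(t^2)$ by invoking smoothness of $t\mapsto h_t$ and of $t\mapsto\EE[\nu_t^\tau\otimes\nu_t^\tau]$, and handles the localization by the exponential bound $\PP(\tau\le t)\le Ce^{-C'/t}$---arguments your sketch needs but does not contain.
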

The proof of Theorem~\ref{T2} is postponed to Section~\ref{S3}

The last Theorem~\ref{T3} specializes Theorem~\ref{T2} to the case $G=SO(3)$. The Lie group involved in inertial navigation, which is our main application, is indeed isometric to products of two copies of $SO(3)$ together with Euclidean spaces. 
Note that the simulations in Section \ref{Ssim} will be done on $SO(3)$. 

\begin{theorem}
\label{T3}
Assume that $G=SO(3)$ endowed with its canonical bi-invariant metric. Also assume that $(\s_1,\s_2,\s_3)=\s (G_1,G_2,G_3)$ where $(G_1,G_2,G_3)$ is an orthonormal basis of $\SG$ and $\s>0$. Then the expansions of Theorem~\ref{T2} take the form $d\SE_t=h_t\, dt$ with
    \begin{equation}\label{E8}
        \begin{split}
          &h_t=  b(\SE_t)+\f12\left(\Hess_{\SE_t}b\left(\SE_t(\cdot),\SE_t(\cdot)\right)-\left[ T_{\SE_t}b(\SE_t(\cdot),\cdot\right]\right)\cdot\EE[\nu_t^\tau\otimes \nu_t^\tau]+O(t^2)
        \end{split}
    \end{equation}
    and for the error term,
    \begin{equation}\label{E9}
        \begin{split}
       & \f{d}{dt} \EE[\nu_t^\tau\otimes \nu_t^\tau]=\s^2\Id+
       \left\{
       2\left(T_{\SE_t}b(\SE_t (\cdot))-[b(\SE_t),\cdot]\right)\odot \Id
       \right\}
       \EE[\nu_t^\tau\otimes \nu_t^\tau]\\
       &+\f18\EE\left[\|\nu_t^\tau\|^2\Id_{(\nu_t^\tau)^\perp}\right]+ O(t^2).
        \end{split}
    \end{equation}
In terms of the Levi-Civita connection $\n$ and the curvature tensor $R$, Equations~\eqref{E8} and~\eqref{E9} rewrite
\begin{equation}\label{E13}
        \begin{split}
          &h_t=  b(\SE_t)+\left(\f12\Hess_{\SE_t}b\left(\SE_t(\cdot),\SE_t(\cdot)\right)+\n_\cdot T_{\SE_t}b(\SE_t(\cdot))\right)\cdot\EE[\nu_t^\tau\otimes \nu_t^\tau]+O(t^2)
        \end{split}
    \end{equation}
    and 
    \begin{equation}\label{E14}
        \begin{split}
       & \f{d}{dt} \EE[\nu_t^\tau\otimes \nu_t^\tau]\\&=\s^2\Id+
       \left\{
       \left(2T_{\SE_t}b(\SE_t (\cdot))+4\n_\cdot b(\SE_t)\right)\odot \Id
       +\f12 R(\cdot^\sharp,\cdot)\cdot
       \right\}
       \EE[\nu_t^\tau\otimes \nu_t^\tau]+ O(t^2).
        \end{split}
    \end{equation}

\end{theorem}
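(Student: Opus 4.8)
The plan is to derive Theorem~\ref{T3} by substituting the specific structure of $SO(3)$ into the general expansions~\eqref{E6} and~\eqref{E7} of Theorem~\ref{T2}. Three features drive every simplification: the metric is bi-invariant, so that $(G_1,G_2,G_3)$ is orthonormal with $[G_i,G_j]=\sum_k\varepsilon_{ijk}G_k$ and the bracket may be read as the vector cross product on $\SG\cong\R^3$; the noise is isotropic, $\s_i=\s G_i$; and on $\SG$ the contractions of structure constants that appear collapse to scalar multiples of $\Id$ or to $\Id_{\nu^\perp}$. The proof therefore reduces to evaluating a short list of bracket sums, reorganising the result algebraically to obtain~\eqref{E8}--\eqref{E9}, and then recasting it intrinsically to obtain~\eqref{E13}--\eqref{E14}.

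First I would handle the drift. The only term of~\eqref{E6} absent from~\eqref{E8} is $\tfrac18\sum_i[[[\s_i,\cdot],\cdot],\s_i]$. Writing $\s_i=\s G_i$ and using the double cross-product identity one checks that $\sum_{i=1}^3[[[G_i,A],B],G_i]=[A,B]$ for all $A,B\in\SG$. This operator is \emph{antisymmetric} in $(A,B)$, hence annihilates the symmetric tensor $\EE[\nu_t^\tau\otimes\nu_t^\tau]$: the term vanishes and~\eqref{E8} follows. The same cross-product computation gives $\sum_{i=1}^3\s_i\otimes\s_i=\s^2\,\Id$, which is the leading term $\s^2\Id$ of~\eqref{E9}.

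The covariance equation~\eqref{E9} is the delicate step. I would record three identities on $\SG$: the Casimir relation $\sum_i\ad_{G_i}^2=-2\,\Id$, its consequence $\sum_i[[G_i,\cdot],G_i]=2\,\Id$, and the projection identity $\sum_i(G_i\times\nu)\otimes(G_i\times\nu)=\|\nu\|^2\Id-\nu\otimes\nu=\|\nu\|^2\,\Id_{\nu^\perp}$. Substituting $\s_i=\s G_i$, the two purely noise-dependent pieces of~\eqref{E7}, namely $-\tfrac13\sum_i[[\s_i,\cdot],\s_i]\odot\Id$ and $\tfrac14\sum_i[\s_i,\cdot]\otimes[\s_i,\cdot]$, become combinations of $\Id$ and $\Id_{\nu^\perp}$ acting on $\EE[\nu_t^\tau\otimes\nu_t^\tau]$, while the surviving $\odot\Id$ factor keeps only the drift $2\big(T_{\SE_t}b(\SE_t(\cdot))-[b(\SE_t),\cdot]\big)$. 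Assembling the noise contributions into the single term $\tfrac18\EE[\|\nu_t^\tau\|^2\,\Id_{(\nu_t^\tau)^\perp}]$ of~\eqref{E9}, and keeping careful track of the symmetric product $\odot$ together with the metric identification of $\EE[\nu\otimes\nu]$ with an endomorphism, is where the bookkeeping is most error-prone.

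Finally, to reach the intrinsic forms~\eqref{E13} and~\eqref{E14} I would use the two standard bi-invariant identities $\n_XY=\tfrac12[X,Y]$ and $R(X,Y)Z=-\tfrac14[[X,Y],Z]$. The first turns brackets into covariant derivatives, $-[b(\SE_t),\cdot]=2\,\n_\cdot b(\SE_t)$ and $-\tfrac12[T_{\SE_t}b(\SE_t(\cdot)),\cdot]=\n_\cdot T_{\SE_t}b(\SE_t(\cdot))$, producing the connection terms of~\eqref{E13}--\eqref{E14}. The second makes the curvature term transparent: since $R(w,\nu)\nu=-\tfrac14[[w,\nu],\nu]=\tfrac14\|\nu\|^2\,\Id_{\nu^\perp}(w)$, the operator $\tfrac12R(\cdot^\sharp,\cdot)\cdot$ applied to $\EE[\nu_t^\tau\otimes\nu_t^\tau]$ equals exactly $\tfrac18\EE[\|\nu_t^\tau\|^2\,\Id_{(\nu_t^\tau)^\perp}]$, identifying the last term of~\eqref{E14} with that of~\eqref{E9}. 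I expect the principal obstacle to be not any individual identity but precisely this assembly: verifying that the several algebraic noise contributions issued from~\eqref{E7} recombine into one geometric curvature term with the stated constant $\tfrac18$.
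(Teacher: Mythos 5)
You follow the same route as the paper: substitute the $SO(3)$ bracket structure into Theorem~\ref{T2}, then rewrite the result via the bi-invariant identities $\n_XY=\f12[X,Y]$ and $R(X,Y)Z=-\f14[[X,Y],Z]$. The drift part of your argument is correct and matches the paper's: your observation that $\sum_{i=1}^3[[[G_i,A],B],G_i]$ is proportional to $[A,B]$, hence antisymmetric and therefore zero on the symmetric tensor $\EE[\nu_t^\tau\otimes \nu_t^\tau]$, is a slightly stronger form of the paper's first identity in~\eqref{E2.15}, and your dictionary $-[b(\SE_t),\cdot]=2\n_\cdot b(\SE_t)$, $-\f12[T_{\SE_t}b(\SE_t(\cdot)),\cdot]=\n_\cdot T_{\SE_t}b(\SE_t(\cdot))$ is exactly the paper's. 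The covariance equation, however, contains two genuine gaps.

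First, your structure constants are off by $\sqrt2$. The paper's proof rests on the fact (its Equation~\eqref{E2.14}) that an orthonormal basis for the metric it uses satisfies $[G_1,G_2]=G_3/\sqrt 2$ and cyclic permutations, not $[G_i,G_j]=\sum_k\varepsilon_{ijk}G_k$; the bracket is the cross product only after rescaling. In that normalization your identities become $\sum_i\ad_{G_i}^2=-\Id$ (not $-2\Id$), $\sum_i[G_i,\nu]\otimes[G_i,\nu]=\f12\|\nu\|^2\Id_{\nu^\perp}$ (the paper's second identity in~\eqref{E2.15}, not $\|\nu\|^2\Id_{\nu^\perp}$), and $R(X,\nu)\nu=\f18\|\nu\|^2P_{\nu^{\perp}}(X)$ (the constant the paper itself derives, not your $\f14$). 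Concretely, the term $\f14\sum_i[\s_i,\cdot]\otimes[\s_i,\cdot]$ of~\eqref{E7} then yields $\f{\s^2}{8}\EE[\|\nu_t^\tau\|^2\Id_{(\nu_t^\tau)^\perp}]$ — the $\f18$ of~\eqref{E9}, up to the $\s^2$ factor — whereas your constants give $\f{\s^2}{4}\EE[\|\nu_t^\tau\|^2\Id_{(\nu_t^\tau)^\perp}]$, twice as large. Your derivation therefore does not land on the stated coefficients.

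Second, and this is precisely the step you defer as ``bookkeeping'', the noise contributions of~\eqref{E7} cannot be assembled into the single term $\f18\EE[\|\nu_t^\tau\|^2\Id_{(\nu_t^\tau)^\perp}]$. The Casimir-type contribution $\left(-\f13\sum_i[[\s_i,\cdot],\s_i]\odot\Id\right)\EE[\nu_t^\tau\otimes\nu_t^\tau]$ is a nonzero multiple of $\EE[\nu_t^\tau\otimes\nu_t^\tau]$ itself: $-\f{\s^2}{3}\EE[\nu_t^\tau\otimes\nu_t^\tau]$ with the paper's normalization, $-\f{2\s^2}{3}\EE[\nu_t^\tau\otimes\nu_t^\tau]$ with yours. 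A multiple of $\EE[\nu_t^\tau\otimes\nu_t^\tau]$ is not a multiple of $\EE[\|\nu_t^\tau\|^2\Id_{(\nu_t^\tau)^\perp}]$, and it is of the same order in $t$ as the terms you keep, so no algebraic regrouping turns it into a curvature term. The paper performs no such recombination either: its proof invokes only the two identities~\eqref{E2.15}, and its Equation~\eqref{E9} simply contains no term proportional to $\s^2\EE[\nu_t^\tau\otimes\nu_t^\tau]$. So your proposal is missing the argument that would dispose of this term; asserting that all noise pieces ``recombine into one geometric curvature term with the stated constant'' is exactly the claim that fails, and under your own normalization the surviving constant would in any case be $\f{\s^2}4$, not $\f18$.
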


\begin{proof}
Note that, on the algebra $\SG$ associated to $SO(3)$, any orthonormal basis $(G_1,G_2,G_3)$ satisfies up to a change for the index numbering:
\begin{equation}\label{E2.14}
[G_1,G_2]=\frac{G_3}{\sqrt{2}};\ [G_2,G_3]=\frac{G_1}{\sqrt{2}};\  [G_3,G_1]=\frac{G_2}{\sqrt{2}}.
\end{equation}
The first part of Theorem \ref{T3} is a direct application of Theorem $2$ considering that, for any $\nu\in\SG$:
\begin{equation}\label{E2.15}
    \sum\limits_{i=1}^n\left[\left[[G_i,\nu],\nu\right],G_i\right]
    =0
    \text{ and }
    \sum\limits_{i=1}^n[G_i,\nu]\otimes[G_i,\nu]=\frac{\|\nu\|^2}{2}\Id_{\nu ^{\perp}}.
\end{equation}
Indeed, as these values do not depend on the choice of the orthonormal basis, we can choose $(\tilde G_1,\tilde G_2,\tilde G_3)$ satisfying \eqref{E2.14} such that $\tilde G_1=\frac{\nu}{\|\nu\|}$. With this choice for a basis the above results are immediate.

To prove the second part of Theorem \ref{T3} we notice that, as the chosen metric is bi-invariant, $\nabla_X \nu=\frac{1}{2}[X,\nu]$ and $R(X,\nu)\nu=-\frac{1}{4}\left[[X,\nu],\nu\right]$ for any $X,\nu\in\SG$. In particular, decomposing $X$ in the basis $(\tilde G_1,\tilde G_2,\tilde G_3)$, we get $\left[[X,\nu],\nu\right]=-\frac{1}{2}\|\nu\|^2P_{\nu^{\perp}}(X)$ and $R(X,\nu)\nu=\frac{1}{8}\|\nu\|^2P_{\nu^{\perp}}(X)$.
\qed
\end{proof}

\section{Proof of Theorem~\ref{T2}}
\label{S3}
 From Theorem~\ref{T1}, assumptions on $X_t$ and Equation~\eqref{E10} for $\nu_t^\tau$ we can deduce that for all $k\ge 1$,
 \begin{equation}
     \label{normnu}
     \|\nu^\tau\|_{t,k}^k:=\E\left[\sup_{s\le t}|\nu_s^\tau|^k \right]\le C_k t^{k/2}
 \end{equation}
 for some $C_k>0$.
In the sequel, 
\begin{equation}\label{betaalpha}
R_{t,k}=\int_0^t\beta_k(s)ds+\int_0^t\sum_{i=1}^m\alpha_{k,i}(s) dW_s^i,
\end{equation}
with $\beta(s)$ and $\alpha_{k,i}(s)$ taking their values in $\SG$, will denote error terms satisfying 
\begin{equation}
    \label{Rtk}
    \EE\left[\sup_{s\le t}\left(\sum_{i=1}^m|\alpha_{k,i}(s)|^k+|\beta_k(s)|^k\right)\right]\le C \|\nu^\tau\|_{t,k}^k 
\end{equation} 
for some $C>0$,
and will change from a line to another. 

Denote $S_t$ the $\SG$-valued semimartingale defined by $S_0=0$ and  $\circ dS_t=\exp(-\nut_t)\circ d\left(\exp(\nut_t)\right)$. 
By derivating \eqref{E1}, we get:
\begin{align}\label{E2.1}
\circ d\Xt_t&=d\SE_t\exp(\nut_t)+\SE_t\circ d\left(\exp(\nut_t)\right)
\notag\\
&=\Xt_t\left(\exp\left(\ad(-\nut_t)\right)(h_t)dt+\circ dS_t\right).
\end{align}
Using \eqref{E3}, we obtain the following relation on $\SG$:
\begin{equation*}
    b(\Xt_t)dt+\circ d\Wt_t=\exp\left(\ad(-\nut_t)\right)(h_t)dt+\circ d S_t
\end{equation*} and thus, passing from Stratonovich to Itô:
\begin{equation}\label{E2.2}
    b(\Xt_t)dt+ dW_t^{\tau}=\exp\left(\ad(-\nut_t)\right)(h_t)dt+ d S_t.
\end{equation}
 We first look at the left-hand side term. As $\Xt_t=\SE_t\exp(\nut_t)$, we get:
\begin{align}\label{E2.3}
    b(\Xt_t)
    &=b(\SE_t)+T_{\SE_t}b\left(\SE_t\nut_t\right)+\frac{1}{2}\Hess_{\SE_t}b\left(\SE_t\nut_t\otimes\SE_t\nut_t\right)+O(|\nu_t^\tau|^3).
\end{align}
We now turn to the right-hand side term. We first have:
\begin{equation}\label{E2.4}
\exp\left(\ad(-\nut_t)\right)(h_t)=h_t+[h_t,\nut_t]+\frac{1}{2}\left[[h_t,\nut_t],\nut_t\right]++O(|\nu_t^\tau|^3).
\end{equation}
For $u\in\SG$, the differential of the exponential map at point $u$ is $d_u\exp=\exp(u)\sum\limits_{k\geq 0}\frac{\ad(-u)^{(k)}}{(k+1)!}$ where $\ad(-u)^{(k)}$ denote the $k$-th iteration of $\ad(-u)$. Then:
\begin{equation}\label{E2.5}
    \circ dS_t=\exp(-\nut_t)\circ d \left(\exp(\nut_t)\right)=\sum\limits_{k\geq 0}\frac{\ad(-\nut_t)^{(k)}}{(k+1)!}(\circ d\nut_t).
\end{equation}
Passing from Stratonovich to Itô and using the Jacobi identity to show that 
$\left[\left[[X,Y],Y\right],X\right]=-\left[\left[X,[X,Y]\right],Y\right]=\left[\left[[X,Y],X\right],Y\right]$:

\begin{align}\label{E2.6}
    dS_t
    &=d\nut_t+\frac{1}{2}[d\nut_t,\nut_t]+\frac{1}{12}\left[[d\nut_t,\nut_t],\nut_t\right]+\frac{1}{6}\left[[d\nut_t,\nut_t],d\nut_t]\right]\notag\\
    &+\frac{1}{24}
    \left[\left[[d\nut_t,\nut_t],d\nut_t\right],\nut_t\right]
    +dR_{t,3}.
\end{align}
Write the decomposition $\nut_t=M_t+D_t$ where $M_t$ is the martingale part of $\nut_t$ and $D_t$ its finite variation drift part. Putting together \eqref{E2.3}, \eqref{E2.4} and \eqref{E2.6}, \eqref{E2.2}, we have:
 \begin{align}
     dM_t&=d\Wt_t-\frac{1}{2}[dM_t,\nut_t]-\frac{1}{12}\left[[dM_t,\nut_t],\nut_t\right]+dR_{t,3};\label{E2.7}\\
     \begin{split}
         dD_t&=-h_tdt+b(\SE_t)dt+T_{\SE_t}b\left(\SE_t\nut_t\right)dt-[h_t,\nut_t]dt-\frac{1}{6}\left[[dM_t,\nut_t],dM_t]\right]\\
    &-\frac{1}{2}[dD_t,\nut_t]-\frac{1}{12}\left[[dD_t,\nut_t],\nut_t\right]
   -\frac{1}{2}\left[[h_t,\nut_t],\nut_t\right]dt\\
    &+\frac{1}{2}\Hess_{\SE_t}b\left(\SE_t\nut_t\otimes\SE_t\nut_t\right)dt-\frac{1}{24}\left[\left[[dM_t,\nut_t],dM_t\right],\nut_t\right]+dR_{t,3}.
     \end{split}\label{E2.8}
 \end{align}

Applying $\ad(-\nut_t)$ twice and once to \eqref{E2.7}, we get:
\begin{equation}
    \left[[dM_t,\nut_t],\nut_t\right]=\left[[d\Wt_t,\nut_t],\nut_t\right]+dR_{t,3}
    \end{equation}
    \begin{equation}
  \begin{split}  [dM_t,\nut_t]
  &=[d\Wt_t,\nut_t]-\frac{1}{2}\left[[d\Wt_t,\nut_t],\nut_t\right]+dR_{t,3}.
    \end{split}
\end{equation}
This provides a nice estimate for $dM_t$:
\begin{equation}
\begin{split}
    dM_t
    &=d\Wt_t-\frac{1}{2}[d\Wt_t,\nut_t]+\frac{1}{6}\left[[d\Wt_t,\nut_t],\nut_t\right]+dR_{t,3}.
\end{split}
\end{equation}
In particular we obtain:
\begin{equation}\label{E2.9}
   \begin{split}d\nut_t\otimes &d\nut_t=dM_t\otimes dM_t
   =\sum\limits_{i=1}^m\left(\sigt_i\otimes \sigt_i-
   \sigt_i\odot[\sigt_i,\nut_t]\right.\\
   &\left.+\left(\frac{1}{3}\sigt_i\odot\left[[\sigt_i,\cdot],\cdot\right]+\frac{1}{4}[\sigt_i,\cdot]\otimes[\sigt_i,\cdot]\right)\left(\nut_t\otimes\nut_t\right)\right)dt+dR_{t,3}.
   \end{split}
\end{equation}
We now look for the estimate of $dD_t$. From \eqref{E2.9}, we get:
\begin{align*}
    &\left[[dM_t,\nut_t],dM_t\right]=\sum\limits_{i=1}^m\left(\left[[\sigt_i,\nut_t],\sigt_i\right]-\frac{1}{2}\left[\left[[\sigt_i,\cdot],\cdot\right],\sigt_i\right]\left(\nut_t\otimes\nut_t\right)\right)dt+dR_{t,3}\\
    &\left[\left[[dM_t,\nut_t],dM_t\right],\nut_t\right]=\sum\limits_{i=1}^m\left[\left[[\sigt_i,\cdot],\sigt_i\right],\cdot\right]\left(\nut_t\otimes\nut_t\right)dt+dR_{t,3}
\end{align*}
Applying $\ad(-\nut_t)$ twice and once to \eqref{E2.8}, we get:
\begin{equation}
    \left[[dD_t,\nut_t],\nut_t\right]=\left[[-h_t+b(\SE_t),\nut_t],\nut_t\right]dt+dR_{t,3}
    \end{equation}
    and
    \begin{equation}
  \begin{split}  [dD_t,\nut_t]
  &=[-h_t+b(\SE_t),\nut_t]dt+\left[T_{\SE_t}b(\SE_t\cdot)\right.\\
  &\left.-\frac{1}{2}[h_t+b(\SE_t),\cdot]-\frac{1}{6}\sum\limits_{i=1}^m\left[[\sigt_i,\cdot],\sigt_i\right],\cdot\right]\left(\nut_t\otimes\nut_t\right)dt+dR_{t,3}.
    \end{split}
\end{equation}
Finally:
{\begin{align}\label{E2.10}
 \frac{d}{dt}&D_t
  =-h_t+b(\SE_t)+\left(T_{\SE_t}b\left(\SE_t\cdot\right)-\frac{1}{2}[h_t+b(\SE_t),\cdot]\right)(\nut_t)\notag\\
    &-\frac{1}{6}\sum\limits_{i=1}^m\left[[\sigma_i,\cdot],\sigma_i\right]\left(\mathbb 1_{\{t\le\tau\}}\nut_t\right)
    +\left(\left[-\frac{1}{2}T_{\SE_t}b(\SE_t\cdot)+\frac{1}{6}[-h_t+b(\SE_t),\cdot],\cdot\right]\right.\\
     &\left.+\frac{1}{2}\Hess_{\SE_t}b\left(\SE_t\cdot\otimes\SE_t\cdot\right)+\frac{1}{8}\sum\limits_{i=1}^m\left[\left[[\sigma_i,\cdot],\cdot\right],\sigma_i\right]
     \right)\left(\mathbb 1_{\{t\le\tau\}}\nut_t\otimes\nut_t\right)
     +\beta_3(t).\notag\end{align}}
with $\beta_3$ defined in~\eqref{betaalpha}.
From Definition \ref{D1}, we have $\esp[\nut_t]=0$. In particular, $\esp[D_t]=0$. {Moreover, similarly to Lemma~2.3 in~\cite{ATW:09}, we can prove that $\PP(\tau\le t)\le Ce^{-C'/t}$ for some $C,C'>0$. Then there exists $C''>0$ such that 
\[
\esp[\mathbb 1_{\{t\le \tau \}}\nut_t\otimes \nut_t]=\esp[\nut_t\otimes \nut_t]+O\left(e^{-\frac {C''}{t}}\right)
\]
and $\esp[\mathbb 1_{\{t\le \tau\}}\nut_t]=O\left(e^{-\frac {C''}{t}}\right)$.} Looking at the mean of \eqref{E2.10}   we obtain:
\begin{align}\label{E2.11}
        h_t&=b(\SE_t)+\left(\left[-\frac{1}{2}T_{\SE_t}b(\SE_t\cdot)+\frac{1}{6}[-h_t+b(\SE_t),\cdot],\cdot\right]+\frac{1}{2}\Hess_{\SE_t}b\left(\SE_t\cdot\otimes\SE_t\cdot\right)\right.\notag\\
     &\left.+\frac{1}{8}\sum\limits_{i=1}^m\left[\left[[\s_i,\cdot],\cdot\right],\s_i\right]
     \right)\esp\left[\nut_t\otimes\nut_t\right]+O(t^{3/2}).
\end{align}
In particular, $h_t=b(\SE_t)+O(t)$ and $\left[[-h_t+b(\SE_t),\cdot],\cdot\right]\esp[\nut_t\otimes\nut_t]=O(t^{3/2})$. Now $h_t$ being smooth, $O(t^{3/2})$ implies $O(t^2)$. This proves Relation \eqref{E6}.

We are left to obtain the estimate of the error term.
Using the Itô relation, we have: \begin{equation}\label{E2.12}
d\esp[\nut_t\otimes\nut_t]=\esp[2dD_t\odot\nut_t+d\nut_t\otimes d\nut_t]
\end{equation}
We have:
\begin{align}\label{E2.13}
    dD_t\odot\nut_t&=\left(-h_t+b(\SE_t)\right)\odot\nut_tdt+\Big(T_{\SE_t}b\left(\SE_t\cdot\right)\odot \Id-\frac{1}{6}\sum\limits_{i=1}^m\left[[\sigt_i,\cdot],\sigt_i\right]\odot \Id\notag\\
    &-\f12[h_t+b(\SE_t),\cdot]\odot \Id\Big)(\nut_t\otimes\nut_t)dt+dR_{t,3}.
\end{align}

To obtain \eqref{E7}, we then just have to look at the means of  \eqref{E2.9} and \eqref{E2.13} and use again that $h_t=b(\SE_t)+O(t)$, $\esp[\nut_t]=0$ and $t\mapsto\esp[\nut_t\otimes \nut_t]$ is smooth (with the same arguments as for the smoothness of $t\mapsto\SE_t$).


\section{Simulations}\label{Ssim}
We performed Matlab simulations to validate the proposed approach in the case of a rotation matrix $X_t$ that evolves on the Lie group $SO(3)$. For that purpose, we simulated $N_{MC}=500$ realisations of \eqref{E3} by taking $b(X)=X^{-1}AX$, with $A$ a given antisymetric matrix, and a standard deviation $\sigma=0.1$ for the Brownian motions. A total time interval of $0.1$ s was considered, decomposed in $N=100$ steps for the discretization. Finally, we computed the Fr\'echet mean of $X_t$ either by leveraging the proposed set of equations \eqref{E13} and \eqref{E14}, or, alternatively, by using a Gauss-Newton on Lie group to minimize an empirical approximation of \eqref{E2} based on the $N_{MC}$ Monte Carlo runs. To visually compare both obtained rotation matrices at the last time step, we represent in Figure \ref{fig:1} their effect on the unitary vectors $(\mathbf{e}_1, \mathbf{e}_2, \mathbf{e}_3)$ of the canonical basis of $\mathbb{R}^3$ (plotted as black arrows). The differently colored clouds of points are obtained by applying the $N_{MC}$ simulated matrices to $\mathbf{e}_1, \mathbf{e}_2$ and $\mathbf{e}_3$, respectively. The blue and red lines, that are nearly superimposed, show the three transformed basis vectors using the theoretical and Monte Carlo Fr\' echet means, respectively, and a very good coincidence between them can be observed.
\begin{figure}[!htb]
    \centering
    \includegraphics[width=.7\linewidth]{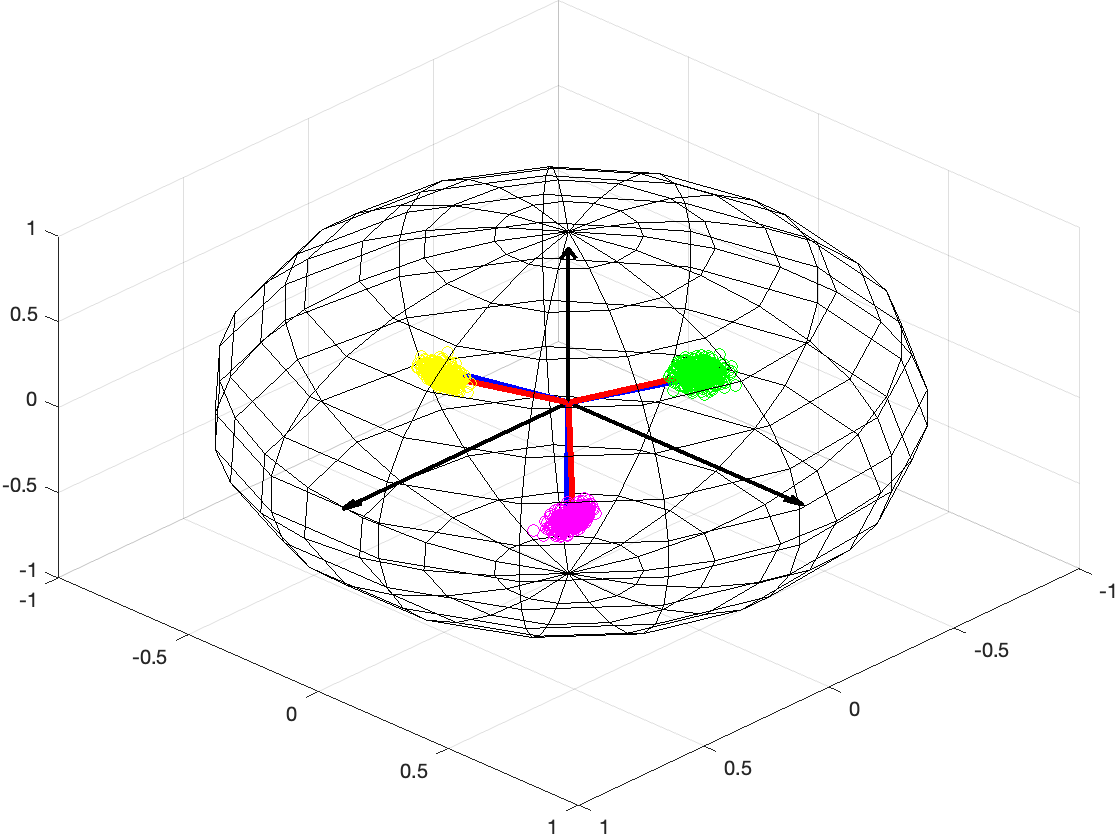}
    \caption{Comparison between the empirical and the proposed Fr\'echet mean.}
    \label{fig:1}
\end{figure}

\section{Conclusion}\label{Sfinal}
In this paper, we focused on the prediction step of a continuous-time filter on Lie group. An extension of our results will be at first to let the diffusion process $X_t$ start from a non Dirac law. The next step will be to update the Fréchet mean of the posterior law by taking into account the current measurement. Finally, let us stress the fact that our method requires a minimal set of assumptions, which allows to extend it to more general state spaces, as symmetric spaces, homogeneous spaces and Riemanian manifolds.

\bibliographystyle{splncs04}
\bibliography{Bibliographie}
\end{document}